\documentclass[12pt]{article}

\usepackage{amssymb}%
\usepackage{amsmath}%
\usepackage{amsthm}%
\usepackage{url}
\usepackage{wasysym}
\usepackage{xcolor}%
\usepackage{mathrsfs}
\usepackage{bbm}
\usepackage{tikz}

\usetikzlibrary{arrows}
\usetikzlibrary{decorations.markings}
\usetikzlibrary {graphs,quotes}
\usepackage{tkz-graph}

\tikzset{
 vertex/.style={circle,draw,minimum size=1.5em},
 edge/.style={->,> = latex'}
}

\allowdisplaybreaks%

{\theoremstyle{plain}%
 \newtheorem{theorem}{Theorem}

}
{\theoremstyle{remark}
\newtheorem{remark}{Remark}
}
{\theoremstyle{definition}
\newtheorem{definition}{Definition}
\newtheorem{example}{Example}
}

\begin{document}

\begin{center}
{\Large A lift of chromatic symmetric functions to $\textsf{NSym}$}
\end{center}

\begin{center}
{\textsc{John M. Campbell}} 

 \ 

\end{center}

\begin{abstract}
 If we consider previously introduced extensions of Stanley's chromatic symmetric function $X_{G}(x_1, x_2, \ldots)$ for a graph $G$ to elements in the 
 algebra $\textsf{QSym}$ of quasisymmetric functions and in the algebra $\textsf{NCSym}$ of symmetric functions in noncommuting variables, this 
 motivates our introduction of a lifting of $X_{G}$ to the dual of $\textsf{QSym}$, i.e., the algebra $ \textsf{NSym}$ of noncommutative symmetric 
 functions, as opposed to $\textsf{NCSym}$. For an unlabelled directed graph $D$, our extension of chromatic symmetric functions provides an element 
 $ \text{{\bf X}}_{D}$ in $\textsf{NSym}$, in contrast to the analogue $Y_{G} \in \textsf{NCSym}$ of $X_{G}$ due to Gebhard and Sagan. Letting $G$ 
 denote the undirected graph underlying $D$, our construction is such that the commutative image of $ \text{{\bf X}}_{D} $ is $ X_{G}$. This projection 
 property is achieved by lifting Stanley's power sum expansion for chromatic symmetric functions, with the use of the $\Psi$-basis of $\textsf{NSym}$, so 
 that the orderings of the entries of the indexing compositions are determined by the directed edges of $D$. We then construct generating sets for 
 $ \textsf{NSym}$ consisting of expressions of the form $\text{{\bf X}}_{D}$, building on the work of Cho and van Willigenburg on chromatic generating 
 sets for $\textsf{Sym}$. 
\end{abstract}

\vspace{0.1in}

\noindent {\footnotesize \emph{MCS}: 05E05, 05C15}

\vspace{0.1in}

\noindent {\footnotesize \emph{Keywords}: noncommutative symmetric function, chromatic symmetric function, chromatic polynomial, graph coloring, directed 
 graph, power sum symmetric function, integer composition} 

\section{Introduction}
 Chromatic symmetric functions were famously introduced by Stanley \cite{Stanley1995} via an extension of the chromatic polynomial construction due to 
 Birkhoff \cite{Birkhoff191213} and are of importance within many areas of algebraic combinatorics. This raises questions as to how mathematical objects of a 
 similar nature could be obtained in a noncommutative setting. In this regard, chromatic symmetric functions have been lifted to the algebra 
 $ \textsf{NCSym}$ of symmetric functions in noncommuting variables by Gebhard and Sagan \cite{GebhardSagan2001}. However, this algebra, which 
 dates back to the work of Wolf in 1936 \cite{Wolf1936} (cf.\ \cite{BergeronReutenauerRosasZabrocki2008}), is not equivalent to the algebra 
 $ \textsf{NSym}$ of noncommutative symmetric functions introduced by Gelfand et al.\ \cite{GelfandKrobLascouxLeclercRetakhThibon1995} in 1995. 
 Ideally, by using noncommutative symmetric functions to obtain a lifting of Stanley's extension $X_{G}$ of the chromatic polynomial for a graph $G$, this 
 could lead toward new ways of approaching open problems concerning chromatic symmetric functions, as in the Stanley--Stembridge conjecture and 
 Stanley's tree isomorphism conjecture. This is representative of a main source of motivation behind the purpose of this paper, which is to introduce and 
 apply a lifting to $\textsf{NSym}$ of chromatic symmetric functions. 

 The foregoing considerations raise questions as to what could be viewed as being most useful or appropriate when it comes to a 
 noncommutative version of $X_{G}$, in the context of a given problem on chromatic symmetric functions. This may be seen as being in parallel with the 
 active research on analogues in $\textsf{NSym}$ of the Schur basis of the algebra $\textsf{Sym}$ of (commutative) symmetric functions. Such research is, 
 in large part, motivated by how Schur-like bases of $\textsf{NSym}$ could be helpful in the development of deeper understandings of unsolved problems 
 related to the classical Schur basis. With the exception of a very recent preprint due to Wang and Zhou \cite{WangZhouunpublished}, it seems that 
 the problem of devising ``chromatic-like'' elements in or bases of $\textsf{NSym}$ has not previously been considered. We offer a new approach toward 
 this problem, and this new approach may be thought of as being based on the idea as to how a given digraph $D$, with $G$ as its underlying undirected 
 graph, can give rise, in a natural and useful way that depends on structural properties of $D$, to an element $\text{{\bf X}}_{D}$ in $\textsf{NSym}$ 
 such that 
\begin{equation}\label{desiredprojection}
 \chi(\text{{\bf X}}_{D}) = X_{G}, 
\end{equation}
 according to the forgetful morphism 
\begin{equation}\label{chimorphism}
 \chi\colon \textsf{NSym} \to \textsf{Sym}
\end{equation}
 providing the commutative image of a given element in $\textsf{NSym}$. This is in contrast to the work of Wang and Zhou \cite{WangZhouunpublished}, 
 which does not involve directed graphs. 

 The research interest, from both combinatorial and algebraic perspectives, in the desired projection property in \eqref{desiredprojection} may be 
 demonstrated in a number of different ways. Informally, if we think of the forgetful morphism $\chi$ as ``forgetting \emph{algebraic} information'' 
 about elements in its domain, then, since $X_{G}$ is in the codomain of $\chi$ for every simple graph $G$, this raises the question as to how we could also 
 think of $\chi$ as ``forgetting \emph{graph-theoretic} information.'' By formalizing this idea with the forgetful functor from the category of directed 
 graphs to the category of undirected graphs, this points toward the research interest in and inherently interdisciplinary nature of 
 \eqref{desiredprojection}. This can be further motivated if we consider \eqref{desiredprojection} in relation to projection properties satisfied by 
 noncommutative analogues of Schur functions, as described below. 

 Since $\textsf{Sym}$ is contained in the algebra $\textsf{QSym}$ of quasisymmetric functions dual to $\textsf{NSym}$, we find, by duality, that 
 $ \textsf{NSym}$ projects onto $\textsf{Sym}$, i.e., according to the algebra morphism in \eqref{chimorphism}. In view of the combinatorial and 
 representation-theoretic interest in how Schur functions can be generalized via the containment of $\textsf{Sym}$ in $\textsf{QSym}$, this points toward 
 corresponding interests in the dual notion as to how Schur-like elements in $\textsf{NSym}$ could project onto the Schur basis of $\textsf{Sym}$, via 
 \eqref{chimorphism}. In this regard, what have been referred to as the \emph{canonical Schur-like bases} of $\textsf{NSym}$ 
 all project onto the Schur basis of 
 $ \textsf{Sym}$ in a natural way. 
 These bases of $\textsf{NSym}$ are the \emph{dual quasi-Schur basis} \cite{BessenrodtLuotovanWilligenburg2011}, 
 the \emph{immaculate basis} \cite{BergBergeronSaliolaSerranoZabrocki2014}, and the \emph{shin basis} \cite{CampbellFeldmanLightShuldinerXu2014}, 
 and each of these bases may be seen as having different advantages in terms of better or closer approximations of different properties of Schur functions, 
 in a noncommutative context. As something of a graph-theoretic counterpart to this phenomenon, different ways of lifting to $\textsf{NSym}$ the family 
 of expressions of the form $X_{G} \in \textsf{Sym}$ can offer different ways of ``capturing'' or approximating combinatorial and algebraic 
 properties of $X_{G}$. 

 The foregoing considerations give weight to the research interest in the lifting we introduce of chromatic symmetric functions to $\textsf{NSym}$. 
 Informally, our construction is based on the idea as to how an unlabelled directed graph $D$, with $G$ as its underlying undirected graph, naturally gives 
 rise to permutations of the indices arising in Stanley's expansion formula for $X_{G}$ in the power sum basis, so as to form an element $\text{{\bf X}}_{D} 
 \in \textsf{NSym}$ such that the projection identity in \eqref{desiredprojection} holds. 

\section{Preliminaries and background}
 Let $\lambda$ denote an integer partition, i.e., a tuple of positive integers that is weakly decreasing, writing $\lambda = (\lambda_{1}, \lambda_{2}, 
 \ldots, \lambda_{\ell(\lambda)})$. 
 The \emph{complete homogeneous generator} $h_{n}$ 
 is given by the formal sum of expressions of the form 
 $x_{i_{1}} x_{i_{2}} \cdots x_{i_{n}}$ 
 for positive integer indices $ i_{1} \leq i_{2} \leq \cdots \leq i_{n}$, 
 writing $h_{0} = 1$, 
 and writing 
\begin{equation}\label{hlambda}
 h_{\lambda} = h_{\lambda_{1}} h_{\lambda_{2}} \cdots h_{\lambda_{\ell(\lambda)}} 
\end{equation}
 to denote \emph{complete homogeneous symmetric functions}, with $h_{()} = 1$ 
 for the empty partition $()$. We may define 
\begin{equation}\label{Symhgenerators}
 \textsf{Sym} := \mathbbm{k}[h_{1}, h_{2}, \ldots], 
\end{equation}
 i.e., so that $\textsf{Sym}$ is the free commutative $\mathbbm{k}$-algebra with $\{ h_{1}, h_{2}, \ldots \}$ as 
 a generating set, for a fixed field $\mathbbm{k}$ (which is understood to be of characteristic 0). As suggested in 
 \eqref{hlambda}, it is customary to let the bases of $\textsf{Sym}$ be indexed by the set $\mathcal{P}$ of integer partitions. 

 We endow $\textsf{Sym}$ with a graded algebra structure by setting the degree of 
 $h_{n}$ as $n$, i.e., so that $\textsf{Sym}$ may 
 be understood as the free commutative $\mathbbm{k}$-algebra that has one generator in every degree, referring to the classic text by Macdonald 
 for a way of formalizing this equivalence via an inverse limit construction \cite[p.\ 18]{Macdonald1995}. This can also be used to formalize the construction 
 of the power sum basis of $\textsf{Sym}$. In this direction, we set the \emph{power sum generator} $p_{n} \in \textsf{Sym}$ as the formal sum 
 of expressions of the form $x_{i}^{n}$ for $i \in \mathbb{N}$ and indeterminates $x_{i}$. This leads us to write $p_{\lambda} = p_{\lambda_{1}} 
 p_{\lambda_{2}} \cdots p_{\lambda_{\ell(\lambda)}}$, leading us to define the \emph{power sum basis} $\{ p_{\lambda} \}_{\lambda \in \mathcal{P}}$. 

 By direct analogy with \eqref{Symhgenerators}, we set 
\begin{equation}\label{NSymdefinition}
 \textsf{NSym} := \mathbbm{k}\langle H_{1}, H_{2}, \ldots \rangle 
\end{equation}
 for noncommuting indeterminates of the form $H_{n}$ such that $\text{deg}(H_{n}) = n$, i.e., so that \eqref{NSymdefinition} may be understood as the 
 free $\mathbbm{k}$-algebra with one generator in each degree. By analogy with $\textsf{Sym}$, it is standard to let bases of $\textsf{NSym}$ be 
 indexed by the set $ \mathcal{C} $ of integer \emph{compositions}, i.e., tuples of positive integers. For $\alpha \in \mathcal{C}$, writing $\alpha = 
 (\alpha_{1}, \alpha_{2}, \ldots, \alpha_{\ell(\alpha)})$, we write $H_{\alpha} = H_{\alpha_{1}} H_{\alpha_{2}} \cdots H_{\alpha_{\ell(\alpha)}}$, with 
 $H_{()} = 1$. This is by direct analogy with the commutative product in \eqref{hlambda}, with $\{ H_{\alpha} \}_{\alpha \in \mathcal{C}}$ forming a 
 basis of $\textsf{NSym}$ that we may refer to as the \emph{complete homogeneous basis} of $\textsf{NSym}$. 

 The algebra morphism $\chi$ in \eqref{chimorphism} may be defined by setting $\chi(H_{\alpha}) = h_{\text{sort}(\alpha)}$, and by extending this relation 
 linearly, where $\text{sort}(\alpha)$ denotes the integer partition obtained by sorting the entries of a given composition $\alpha$. Our construction 
 requires a basis $\{ \Psi_{\alpha} \}_{\alpha \in \mathcal{C}}$ of $\textsf{NSym}$ defined below that projects onto the basis $\{ p_{\lambda} 
 \}_{\lambda \in \mathcal{P}}$. 

 We let the \emph{refinement ordering} be denoted as 
 $\leq$ and defined on tuples of positive integers so that 
 $a \leq b$ if $D(b) \subseteq D(a)$, where $D(c)$ denotes the 
 descent set of a composition $c$, writing $D(c) = \{ c_{1}, c_{1} + c_{2}, \ldots, c_{1} + c_{2} + \cdots + c_{\ell(c) - 1} \} $. The 
 \emph{ribbon basis} $\{ 
 R_{\alpha} \}_{\alpha \in \mathcal{C}}$ of $\textsf{NSym}$ may be defined in relation to the $H$-basis according to the equivalent relations such that 
\begin{align*}
 R_{\alpha} = \sum_{\beta \geq \alpha} (-1)^{\ell(\alpha) - \ell(\beta)} H_{\beta} \ \ \ \text{and} \ \ \ 
 H_{\alpha} = \sum_{\beta \geq \alpha} R_{\beta}. 
\end{align*}
 This leads us to define $ \Psi_{n} = \sum_{i=0}^{n-1} (-1)^{i} R_{(1^{i}, n - i)} $ for $n \in \mathbb{N}$, writing 
\begin{equation}\label{Psiprodrule}
 \Psi_{\alpha} = \Psi_{\alpha_{1}} 
 \Psi_{\alpha_{2}} \cdots \Psi_{\alpha_{\ell(\alpha)}}
\end{equation}
 for $\alpha \in \mathcal{C}$, with $\Psi_{()} = 1$. 
 The basis $\{ \Psi \}_{\alpha \in \mathcal{C}}$ 
 of $\textsf{NSym}$
 provides a noncommutative version of the power sum basis in the sense that 
\begin{equation}\label{chiPsin}
 \chi(\Psi_{n}) = p_{n}. 
\end{equation}

 Letting $G = (V(G), E(G))$ be an undirected graph, the \emph{chromatic symmetric function} associated with $G$ may be denoted and defined so that 
\begin{equation}\label{XGdefinition}
 X_{G}(x_{1}, x_{2}, \ldots) = X_{G} = \sum_{\text{$\kappa$ proper}} \prod_{v \in V(G)} x_{\kappa(v)}, 
\end{equation}
 where the above sum is over all proper colorings $\kappa\colon V(G) \to \mathbb{N}$, referring to \cite{Stanley1995} and subsequent literature for details. 
 The definition in \eqref{XGdefinition} generalizes the chromatic polynomial $\chi_{G}$ for $G$ in the sense that $X_{G}$ evaluated with the arguments 
 $x_{1} = x_{2} = \cdots = x_{n} = 1$ and the arguments $x_{n+1} = x_{n+2} = \cdots = 0$ equals $\chi_{G}(n)$, again with reference to 
 Stanley's work \cite{Stanley1995}. 

\subsection{Previous extensions of chromatic symmetric functions}
 \emph{Chromatic quasisymmetric functions} were introduced in \cite{ShareshianWachs2016} and are defined on labeled digraphs; see also notable 
 research contributions on chromatic quasisymmetric functions as in \cite{AlexanderssonPanova2018}. 
 By direct analogy with Stanley's definition for $X_{G}$, as in \eqref{XGdefinition}, 
 Gebhard and Sagan \cite{GebhardSagan2001} set 
\begin{equation}\label{YGdefinition}
 Y_{G} := \sum_{\kappa} y_{\kappa(v_{1})} y_{\kappa(v_{2})} \cdots y_{\kappa(v_{d})} 
\end{equation}
 where the sum in \eqref{YGdefinition} is over all proper colorings of a graph $G$ with vertices labeled with $v_{1}$, $v_{2}$, $\ldots$, $v_{d}$, 
 where the $y$-variables in \eqref{YGdefinition} are set to be noncommuting, in contrast to \eqref{XGdefinition}. The noncommutative chromatic 
 symmetric function defined in \eqref{YGdefinition} is in the algebra $\textsf{NCSym}$ reviewed below \cite{DahlbergvanWilligenburg2020}. It is 
 explicitly stated by Gebhard and Sagan \cite{GebhardSagan2001} that symmetric functions in noncommuting \emph{variables} are not the same as 
 the noncommutative symmetric functions introduced by Gelfand et al.\ \cite{GelfandKrobLascouxLeclercRetakhThibon1995}, and this is further clarified 
 below and motivates a lifting of chromatic symmetric functions to $\textsf{NSym}$. 

 The algebra $\textsf{NCSym}$ of symmetric functions in noncommuting variables was introduced by Bergeron et al.\ in 
 \cite{BergeronReutenauerRosasZabrocki2008}, building on the work of Wolf \cite{Wolf1936} on symmetric functions in noncommuting variables. In 
 contrast to bases of $\textsf{NSym}$ being indexed by integer compositions, the bases of $\textsf{NCSym}$ are indexed by set partitions. This can be 
 made more explicit by writing 
\begin{align}
 \textsf{NCSym} & = \bigoplus_{n \geq 0} \mathscr{L}\{ \text{{\bf m}}_{A} \}_{A \vdash [n]} \ \ \ \text{and} \nonumber \\ 
 \textsf{NSym} & = \bigoplus_{n \geq 0} \mathscr{L}\{ H_{\alpha} \}_{\alpha \vDash n}, \label{NSymgraded}
\end{align} 
 where $A$ denotes a set partition of $[n] = \{ 1, 2, \ldots, n \}$, and where each family of the form $\{ \text{{\bf m}}_{A} \}_{A \vdash [n]} $ is a basis 
 of the graded component of $\textsf{NCSym}$ of degree $n$, and where $\alpha \vDash n$ indicates that $\alpha$ is a composition of $n$, i.e., that 
 the sum of the entires of $\alpha$ equals $n$, letting it be understood that the empty composition $()$ is such that $() \vDash 0$. 

 Our introduction of and analyses of chromatic noncommutative symmetric functions are motivated by past generalizations and variants of chromatic 
 symmetric functions, and this includes chromatic multisymmetric functions \cite{CrewHaithcockReynesSpirkl2024}, a $K$-theoretic analogue of 
 chromatic symmetric functions \cite{CrewPechenikSpirkl2023}, what are referred to as \emph{$H$-chromatic symmetric functions} defined for a given 
 graph $G$ and with a parameter given by a graph $H$ such that $|V(G)| = |V(H)|$ \cite{EaglesFoleyHuangKarangozishviliYu2022}, an extension of 
 $X_{G}$ to binary delta-matroids \cite{NenashevaZhukov2021}, an analogue of chromatic symmetric functions used to construct bases for the algebra 
 generated by the Schur $Q$-functions \cite{ChoHuhNam2021}, extended chromatic symmetric functions for vertex-weighted graphs 
 \cite{CrewSpirkl2020}, and the \emph{generalized chromatic functions} that are due to Aliniaeifard et al.\ and that generalize chromatic symmetric 
 functions via $P$-partitions \cite{AliniaeifardLivanWilligenburg2024}.

 A notable generalization of chromatic quasisymmetric functions based on oriented graphs is due to Ellzey \cite{Ellzey2017}. This suggests that 
 generalizing chromatic symmetric functions to elements in the algebra dual to $\textsf{QSym}$ would provide a natural follow-up to the work 
 Ellzey \cite{Ellzey2017}, in which noncommutative symmetric functions were not considered. Ellzey's construction is such that the case whereby a 
 directed graph is acyclic reduces to Shareshian's and Wachs' chromatic quasisymmetric functions \cite{ShareshianWachs2016}, 
 and this is not equivalent to our new approach toward lifting 
 $X_{G} \in \textsf{Sym}$ to $\textsf{NSym}$, 
 which does not seem to have been considered in the extant literature subsequent and related to 
 Ellzey's generalization of chromatic quasisymmetric functions 
 \cite{AlexanderssonPanova2018,AlexanderssonSulzgruber2021,AliniaeifardLivanWilligenburg2024,AlistePrietoCrewSpirklZamora2021,
CrewSpirkl2020,Dahlberg2020,EllzeyWachs2020,LoehrWarrington2024,WangWang2020,White2021}. 

\section{Main construction and results}
 The key to our lifting of $X_{G}$ is given by 
 a formula due to Stanley  for expanding $X_{G}$ into the power sum basis \cite{Stanley1995}, namely 
\begin{equation}\label{XGtop}
 X_{G} = \sum_{S \subseteq E} (-1)^{|S|} p_{\lambda(S)}, 
\end{equation}
 where the expression $\lambda(S)$ involved in \eqref{XGtop} may be defined as follows. For a subset $S$ of $E = E(G)$ for a graph $G = (V(G), E(G))$ 
 such that $|V(G)| = n$, the expression $\lambda(S)$ denotes the partition of $n$ obtained as follows. By taking the spanning subgraph of $G$ that 
 has $V$ as its vertex set and $S$ as its edge set, we then form a tuple such that its entries list, in a given order, the numbers of vertices in the connected 
 components of the spanning subgraph we have specified. Then $\lambda(S)$ is obtained by sorting the entries of the specified tuple. 

\begin{example}
 Inputting 
\begin{verbatim}
G = graphs.CycleGraph(4);
XG = G.chromatic_symmetric_function(); 
XG
\end{verbatim}
 into {\tt SageMath}, we find that the chromatic symmetric function $X_{G}$, 
 for the case whereby $G$ is the cycle graph of order $4$, 
 is such that 
\begin{equation}\label{XGtopSageC4}
 X_{G} = p_{1111} - 4 p_{211} + 2 p_{22} + 4 p_{31} - 3 p_{4}. 
\end{equation}
\end{example}

 In view of our goal of lifting \eqref{XGtop} to expansions in $\textsf{NSym}$ defined for directed graphs, we begin by recalling 
 some basic terminology concerning digraphs, 
 with reference to the text by Chartrand and Lesniak \cite[\S1.4]{ChartrandLesniak2005}. A \emph{directed 
 graph} or \emph{digraph} is an ordered pair $D = (V(D), E(D))$ for a finite set $V(D)$ and for a set $E(D)$ consisting of ordered pairs consisting of 
 elements of $V(D)$, with the elements in $E(D)$ being referred to as \emph{arcs} or as \emph{directed edges}. 
 Of central importance in our construction are the concepts of \emph{outdegree} and \emph{indegree}, again with reference to Chartrand and Lesniak's 
 text \cite[p.\ 26]{ChartrandLesniak2005}. For an arc $a = (u, v)$ in a digraph $D$, the vertex $u$ is referred to as being \emph{adjacent to} $v$ 
 and $v$ is said to be \emph{adjacent from} $u$. The \emph{outdegree} $\text{od} \, w$ of a vertex in a digraph $D$ is the number of vertices of $D$ 
 adjacent from $w$. Similarly, the \emph{indegree} $\text{id} \, w$ of the vertex $w$ is equal to the number of vertices of $D$ adjacent to $v$. The 
 \emph{degree} $\text{deg} \, w$ is such that $\text{deg} \, w = \text{od} \, w + \text{id} \, w$. 

\subsection{Construction and well-definedness}
  Our lifting $X_{G}$ so as to construct generating sets for $\textsf{NSym}$ indexed by digraphs   is closely related to what is referred to as the \emph{First  
  Theorem   of Digraph Theory} \cite[p.\ 26]{ChartrandLesniak2005},  
 which gives us, for a digraph $D$ with a vertex set we denote with $V(D) = \{ v_{1}, v_{2}, \ldots, v_{|V(D)|} \}$, that 
 $$ \sum_{i=1}^{|V(D)|} \text{od} \, v_{i} = \sum_{i=1}^{|V(D)|} \text{id} \, v_{i} = |E(D)|. $$ 

\begin{definition}
 For a subset $W = \{ w_{1}, w_{2}, \ldots, w_{|W|} \}$ of $V = V(D)$ for a digraph $D$, 
 we define the \emph{total degree} $\text{td} \, W$ of $W$ as 
 $$ \text{td} \, W := \sum_{i=1}^{|W|} \text{od} \, w_{i} 
 - \sum_{i=1}^{|W|} \text{id} \, w_{i}. $$ 
\end{definition}

\begin{example}\label{examplereversed}
 Let $D = (V(D), E(D))$ denote the digraph illustrated below, writing 
 $V = V(D) = \{ v_{1}, v_{2}, v_{3}, v_{4} \}$. 
\begin{center}
\begin{tikzpicture}
\node[vertex] (1) at (0,0) {$v_1$};
\node[vertex] (2) at (2,0) {$v_2$};
\node[vertex] (3) at (0,-2) {$v_3$};
\node[vertex] (4) at (2,-2) {$v_4$};
\draw[black, very thick, edge] (1) -- (2); 
\draw[black, very thick, edge] (3) -- (1);
\draw[black, very thick, edge] (2) -- (4);
\draw[black, very thick, edge] (3) -- (4);
\end{tikzpicture}
\end{center}
 Now, we fix the subset $S = \{ (v_{1}, v_{2}), 
 (v_{2}, v_{4}) \}$ of $E = E(D)$ highlighted below. 
\begin{center}
\begin{tikzpicture}
\node[vertex] (1) at (0,0) {$v_1$};
\node[vertex] (2) at (2,0) {$v_2$};
\node[vertex] (3) at (0,-2) {$v_3$};
\node[vertex] (4) at (2,-2) {$v_4$};
\draw[red, very thick, edge] (1) -- (2); 
\draw[black, very thick, edge] (3) -- (1);
\draw[red, very thick, edge] (2) -- (4);
\draw[black, very thick, edge] (3) -- (4);
\end{tikzpicture}
\end{center}
 By then writing $S' = \{ \{ v_{1}, v_{2} \}, \{ v_{2}, v_{4} \} \}$ to denote the edge subset corresponding to $S$ by replacing directed edges with 
 corresponding edges of the undirected graph underlying $D$, we find that $\lambda(S') = (3, 1) \vdash |V|$. We find that the total degrees of the 
 subsets $\{ v_{1}, v_{2}, v_{4} \}$ and $\{ v_{3} \}$ are $-2$ and $2$, respectively. 
\end{example}

\begin{definition}\label{componenttuple}
 Let $D = (V(D), E(D))$ be a digraph and let $G = (V(G), E(G))$ denote its underlying undirected graph, writing $V = V(D) = V(G)$. Moreover,  we let the 
 vertices of $D$ and $G$ be labeled and ordered,  writing  
\begin{equation}\label{displayVorder}
 V = \{ v_{1} < v_{2} < \cdots < v_{|V|} \}. 
\end{equation}
    Let $S \subseteq  E(D)$. We  also let $S'$ denote  
  the set of edges in $E(G)$  obtained by forming $2$-sets from the arcs in $S$.  We write 
\begin{equation}\label{tuplesuperscript}
 \left( C_{1}^{S, D}, C_{2}^{S, D}, \ldots, C^{S, D}_{ \ell\left( \lambda(S') \right) } \right) 
 \end{equation}
 to denote an ordered tuple   consisting   of all of the connected components of the spanning subgraph of $G$ induced by $S'$, where the ordering in 
 \eqref{tuplesuperscript} is determined as follows, and where we may write $C_{i} = C_{i}^{S, D}$. For each index $i \in \{ 1, 2, \ldots, \ell\left( 
 \lambda(S') \right) \}$, the cardinality of $C_{i}$ satisfies $|C_{i}| = \left( \lambda(S') \right)_{i}$. If $|C_{i}| = |C_{i+1}|$ for $i \in \{ 1, 2, 
 \ldots, \ell\left( \lambda(S') \right) - 1 \}$, then we sort $C_{i}$ and $C_{i+1}$ by identifying a subset of $V$ as a word on the alphabet $\{ 1, 2, 
 \ldots \}$ and by then sorting words in reverse lexicographic order. We refer to the tuple in \eqref{tuplesuperscript} as the \emph{component tuple} 
 of $D$ induced by $S$ and $<$. 
\end{definition}

\begin{example}\label{ex2ndcycle}
 We set $D = (V(D), E(D))$ denote the digraph illustrated below, again writing 
 $V = V(D) = \{ v_{1} < v_{2} < v_{3} < v_{4} \}$, 
 noting the contrast to Example \ref{examplereversed}. 
\begin{center}
\begin{tikzpicture}
\node[vertex] (1) at (0,0) {$v_1$};
\node[vertex] (2) at (2,0) {$v_2$};
\node[vertex] (3) at (0,-2) {$v_3$};
\node[vertex] (4) at (2,-2) {$v_4$};
\draw[black, very thick, edge] (1) -- (2); 
\draw[black, very thick, edge] (1) -- (3);
\draw[black, very thick, edge] (2) -- (4);
\draw[black, very thick, edge] (4) -- (3);
\end{tikzpicture}
\end{center}
 Now, we fix the subset $S = \{ (v_{1}, v_{3}) \}$ of $E = E(D)$ highlighted below. 
\begin{center}
\begin{tikzpicture}
\node[vertex] (1) at (0,0) {$v_1$};
\node[vertex] (2) at (2,0) {$v_2$};
\node[vertex] (3) at (0,-2) {$v_3$};
\node[vertex] (4) at (2,-2) {$v_4$};
\draw[black, very thick, edge] (1) -- (2); 
\draw[red, very thick, edge] (1) -- (3);
\draw[black, very thick, edge] (2) -- (4);
\draw[black, very thick, edge] (4) -- (3);
\end{tikzpicture}
\end{center}
 Being consistent with the notation in Definition \ref{componenttuple}, we have that $S' $ $ =$ $ \{ \{ v_{1}$, $ v_{3} \} \} $ $ \subseteq $ $ E(G)$. We 
 thus have that $\lambda(S') = (2, 1, 1)$. According to the given labeling for $D$, we find that the component tuple of $D$ (induced by $S$ and $<$) is 
 $ ( \{ v_{1}, v_{3} \}, \{ v_{4} \}, \{ v_{2} \} )$. 
\end{example}

\begin{definition}\label{definitionalpha}
 Let $D$ and $G$  be as in Definition  \ref{componenttuple}, 
  letting  $V$   be ordered as in  \eqref{displayVorder}.
  Moreover, we let $S$ and $S'$
    be as in Definition \ref{componenttuple}.  
   We set $\alpha(S) = \alpha^{>}_{D}(S)$ as an integer composition of $|V|$ 
 obtained by permuting the entries of $\lambda(S')$, writing 
\begin{equation}\label{alphaSlambdaorder}
 \alpha(S) = \left( 
 (\lambda(S'))_{i_{1}^{S, D, >}}, 
 (\lambda(S'))_{i_{2}^{S, D, >}}, \ldots, 
 (\lambda(S'))_{i_{\ell(\lambda(S'))}^{S, D, >}} \right), 
\end{equation}
 where the indices of the form $i_{j}$ for $j \in \{ 1, 2, \ldots, \ell(\lambda(S')) \}$ in \eqref{alphaSlambdaorder} 
 may be determined as follows (and are defined according to the order relation $>$ in \eqref{displayVorder}), 
 writing $i_{j} = i_{j}^{S, D, >}$. 
 Letting $j, k \in \{ 1, 2, \ldots, \ell(\lambda(S')) \}$, we let the relation $\rhd = \rhd^{>}$ (which again depends
 on the ordering $>$ in \eqref{displayVorder})
 be such that $ i_{j} \rhd i_{k} $ if: 
\begin{enumerate}

\item $ \text{td} \, C_{j}^{S, D} > \text{td} \, C_{k}^{S, D}$; or 

\item $ \text{td} \, C_{j}^{S, D} = \text{td} \, C_{k}^{S, D} $ and 
 $ | C_{j}^{S, D} | > | C_{k}^{S, D} |$; or 

\item $ | C_{j}^{S, D} | = | C_{k}^{S, D} |$ 
 and $ C_{j}^{S, D} $ is lexicographically strictly greater than $ C_{k}^{S, D}$, i.e., 
 according to the ordering $v_{1} < v_{2} < \cdots < v_{|V|}$. 

\end{enumerate}

\noindent We may define $\unrhd = \unrhd^{>}$, 
 by letting $ i_{j} \unrhd i_{k}$ 
 if it is either the case that $ i_{j} \rhd i_{k} $ or that $ i_{j} = i_{k}$. 
\end{definition}

\begin{remark}
 It is immediate that: For any order $>$ such that the reflexive closure of $>$ is a total order on $V$, the relation $ \unrhd^{>}$ is a total order. 
\end{remark}

\begin{example}
 For $D$ and $S$ as in Example \ref{examplereversed}, 
 we recall that 
 $\lambda(S') = (3, 1)$, with 
 $\text{td} \, \{ v_{1}, v_{2}, v_{4} \} = -2$ and 
 $\text{td} \, \{ v_{3} \} = 2$. 
 We then find that 
\begin{align*}
 \alpha(S) 
 & = \left( (\lambda(S'))_{i_{1}^{S, D}}, (\lambda(S'))_{i_{2}^{S, D}} \right) \\ 
 & = \left( (\lambda(S'))_{2}, (\lambda(S'))_{1} \right) 
 = (1, 3), 
\end{align*}
 since $ i_{2}^{S, D} \rhd i_{1}^{S, D}$, since 
 $\text{td} \, \{ v_{3} \} = 2 > \text{td} \, \{ v_{1}, v_{2}, v_{4} \} = -2$. 
\end{example}

\begin{theorem}\label{theoremalphadefined}
 For a digraph $D$ with $G$ as its underlying undirected graph, and for $S \subseteq E(D)$, any two labelings of $V(D) = V(G)$ produce the same 
 tuple $\alpha(S)$. 
\end{theorem}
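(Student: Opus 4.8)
The plan is to isolate precisely which ingredients in the construction of $\alpha(S)$ genuinely depend on the labeling $<$ of $V$, and then to show that the one labeling-dependent ingredient can never affect the resulting composition. First I would observe that neither the spanning subgraph of $G$ induced by $S'$ nor its connected components depend on the labeling: the set $S'$ is obtained from $S$ by forgetting orientations, and connectivity is a purely structural property of $G$. Thus the collection $\{ C_{1}, C_{2}, \ldots, C_{\ell(\lambda(S'))} \}$ of connected components, regarded as a set of subsets of $V$, is the same for any two labelings. Likewise, for each component $C$, both its cardinality $|C|$ and its total degree $\text{td} \, C = \sum_{w \in C} \text{od} \, w - \sum_{w \in C} \text{id} \, w$ are determined entirely by $D$ together with the underlying set $C$, since outdegree and indegree are intrinsic to the arcs of $D$ and do not reference any ordering of $V$. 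I would also record that, by the construction in Definition \ref{definitionalpha}, the tuple $\alpha(S)$ depends on the preliminary component tuple of Definition \ref{componenttuple} only through the multiset of components, since the entries of $\lambda(S')$ are re-sorted according to $\rhd$.

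Next I would examine the total order $\unrhd$ used to build $\alpha(S)$. Conditions (1) and (2) in Definition \ref{definitionalpha} compare components only through $\text{td} \, C$ and $|C|$, both of which we have just seen to be labeling-independent; only condition (3), the lexicographic tiebreaker, invokes the ordering $<$ on $V$. Consequently, for any two labelings $<_{1}$ and $<_{2}$, the induced total orders $\unrhd^{<_{1}}$ and $\unrhd^{<_{2}}$ agree on every pair of components that differ in total degree or in cardinality, and can disagree only on the relative order of two components sharing both the same total degree and the same cardinality. In other words, the permutation carrying the $\unrhd^{<_{1}}$-sorted list of components to the $\unrhd^{<_{2}}$-sorted list only rearranges components within blocks of components that share a common total degree and a common cardinality.

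Finally I would conclude by recalling that $\alpha(S)$ records, read off in $\unrhd$-order, the cardinalities $|C_{i}| = (\lambda(S'))_{i}$ of the components. Within any block of components of equal cardinality, every entry thereby recorded is the same integer, so rearranging the components inside such a block leaves the sequence of cardinalities unchanged. Hence the permutation relating $\unrhd^{<_{1}}$ and $\unrhd^{<_{2}}$ fixes the composition of recorded cardinalities, which gives $\alpha^{<_{1}}(S) = \alpha^{<_{2}}(S)$, as required. The only point demanding care, and the main obstacle, is the verification that the labeling-dependent tiebreaker in condition (3) is invoked exclusively to separate components that contribute identical entries to $\alpha(S)$; once this is pinned down, labeling-independence follows immediately. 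I would additionally invoke the preceding Remark to note that $\unrhd^{<}$ is a genuine total order for each labeling, so that $\alpha^{<}(S)$ is unambiguously defined before the two labelings are compared.
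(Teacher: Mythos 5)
Your proposal is correct and takes essentially the same approach as the paper's proof: both arguments isolate the lexicographic tiebreaker in condition (3) of Definition \ref{definitionalpha} as the only labeling-dependent ingredient and observe that two labelings can therefore disagree only on the relative order of components with $|C_{j}^{S,D}| = |C_{k}^{S,D}|$, i.e., components contributing equal entries $(\lambda(S'))_{j} = (\lambda(S'))_{k}$ to the composition, so the resulting tuple $\alpha(S)$ is unchanged. Your write-up makes explicit a few points the paper leaves implicit (the labeling-independence of the components, their cardinalities, and their total degrees, and the fact that $\alpha(S)$ depends on the preliminary component tuple of Definition \ref{componenttuple} only through the multiset of components), but the core argument is the same.
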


\begin{proof}
 Being consistent with the notation in Definition \ref{definitionalpha}, suppose that the ordering $V = \{ v_{1} < v_{2} < \cdots < v_{|V|} \}$ 
 produces the same composition in \eqref{alphaSlambdaorder} according to the construction in Definition \ref{definitionalpha}. Now, let $\sigma\colon \{ 
 1, 2, \ldots, |V| \} \to \{ 1, 2, \ldots, |V| \}$ be any bijection. Define the order relation $\blacktriangleleft$ so that 
\begin{equation}\label{vblackvblack}
 v_{\sigma(1)} \blacktriangleleft v_{\sigma(2)} \blacktriangleleft \cdots \blacktriangleleft v_{\sigma(|V|)}
\end{equation}
 and let vertex subsets ordered lexicographically with respect to $\blacktriangleleft$ 
 be determined by rewriting \eqref{vblackvblack} as 
\begin{equation}\label{202949190909195490AM91A}
 w_{1} \blacktriangleleft w_{2} \blacktriangleleft \cdots \blacktriangleleft w_{|V|}, 
\end{equation}
 and by rewriting a vertex subset by replacing a vertex with its index in 
 \eqref{202949190909195490AM91A} (and by then rewriting this resultant set as a word 
 on the alphabet $\{ 1, 2, \ldots \}$). 
 We thus obtain the composition 
\begin{equation}\label{compositionblacktri}
 \left( 
 (\lambda(S'))_{i_{1}^{S, D, \blacktriangleright}}, 
 (\lambda(S'))_{i_{2}^{S, D, \blacktriangleright}}, \ldots, 
 (\lambda(S'))_{i_{\ell(\lambda(S'))}^{S, D, \blacktriangleright}} \right), 
\end{equation}
 according to Definition \ref{definitionalpha}. The case whereby $ \big( i_{j} \rhd^{>} i_{k} \big) \wedge \lnot \big( i_{j} \rhd^{\blacktriangleright} 
 i_{k} \big) $ implies that $ | C_{j}^{S, D} | = | C_{k}^{S, D} |$ (and that the lexicographic ordering of $ C_{j}^{S, D} $ and $ C_{k}^{S, D} $ is 
 reversed if $<$ is used in place of $\blacktriangleleft$ or vice-versa). The symmetric case whereby $ \lnot \big( i_{j} \rhd^{>} i_{k} \big) \wedge \big( 
 i_{j} \rhd^{\blacktriangleright} i_{k} \big) $ again implies that $ | C_{j}^{S, D} | = | C_{k}^{S, D} |$. So, the total orderings $\unrhd^{>}$ and $ 
 \unrhd^{\blacktriangleright}$ \ are equivalent, except for indices $j, k \in \{ 1, 2, \ldots, \ell(\lambda(S')) \}$ for which $ C_{j}^{S, D} $ and 
 $ C_{k}^{S, D} $, i.e., for which $(\lambda(S'))_{j}$ and $(\lambda(S'))_{k}$ are equal, and hence the equality of \eqref{alphaSlambdaorder} 
 and \eqref{compositionblacktri}. 
\end{proof}

\begin{example}
 Letting $D$ and $S$ and $<$ be as in Example \ref{ex2ndcycle}, we again write $V(D) = \{ v_{1} < v_{2} < v_{3} < v_{4} \}$, and we recall that 
 $ \lambda(S') = (2, 1, 1)$ and that the component tuple of $D$ (induced by $S$ and $<$), is 
\begin{equation}\label{v1v3component}
 ( \{ v_{1}, v_{3} \}, \{ v_{4} \}, \{ v_{2} \} ). 
\end{equation}
 Observe that $\text{td} \, \{ v_{1}, v_{3} \} = \text{td} \, \{ v_{4} \} 
 = \text{td} \, \{ v_{2} \} = 0$. 
 We then find that 
\begin{equation}\label{alphanormal211}
 \alpha^{>}_{D}(S) = (2, 1, 1). 
\end{equation}
 Now let us relabel $D$ as below, 
 maintaining the edge highlighting from Example \ref{ex2ndcycle}. 
\begin{center}
\begin{tikzpicture}
\node[vertex] (1) at (0,0) {$w_3$};
\node[vertex] (2) at (2,0) {$w_4$};
\node[vertex] (3) at (0,-2) {$w_1$};
\node[vertex] (4) at (2,-2) {$w_2$};
\draw[black, very thick, edge] (1) -- (2); 
\draw[red, very thick, edge] (1) -- (3);
\draw[black, very thick, edge] (2) -- (4);
\draw[black, very thick, edge] (4) -- (3);
\end{tikzpicture}
\end{center}
 So, we may write 
\begin{align*}
 V(D) & = \{ v_{3} \blacktriangleleft v_{4} \blacktriangleleft v_{1} \blacktriangleleft v_{2} \} \\ 
 & = \{ w_{1} \blacktriangleleft w_{2} \blacktriangleleft w_{3} \blacktriangleleft w_{4} \}. 
\end{align*}
 In this case, the component tuple of $D$ induced by $S$ and $\blacktriangleleft$ is 
\begin{align*}
 & ( \{ w_{1}, w_{3} \}, \{ w_{4} \}, \{ w_{2} \} ) = \\
 & ( \{ v_{3}, v_{1} \}, \{ v_{2} \}, \{ v_{4} \} ), 
\end{align*}
 in contrast to the ordered tuple in \eqref{v1v3component}. 
 We then find that $ \alpha^{\blacktriangleright}_{D}(S) = (2, 1, 1)$, in accordance with \eqref{alphanormal211}. 
\end{example}

 Theorem \ref{theoremalphadefined} gives us that \eqref{displayXD} is well defined in the sense that the expansion in \eqref{displayXD} does not depend 
 on any labeling for $D$. 

\begin{definition}\label{definitionXD}
 For a digraph $D$, we define the \emph{chromatic noncommutative symmetric function} $\text{{\bf X}}_{D}$ so that 
\begin{equation}\label{displayXD}
 \text{{\bf X}}_{D} = \sum_{S \subseteq E(D)} (-1)^{|S|} \Psi_{\alpha(S)}. 
\end{equation}
\end{definition}

\begin{example}
 Letting $D$ denote the digraph shown in Example \ref{examplereversed}, 
 we may verify that the expansion 
\begin{equation}\label{firstXDtoPsiex}
 \text{{\bf X}}_{D} = \Psi_{1111} - 2 \Psi_{211} - \Psi_{121} - \Psi_{112} + 3 \Psi_{31} + \Psi_{13} + 2 \Psi_{22} - 3 \Psi_{4} 
\end{equation}
 holds. Observe how \eqref{firstXDtoPsiex} 
 provides a noncommutative lifting of 
 of the power sum basis expansion 
 that is shown in \eqref{XGtopSageC4} 
 and that is for the chromatic symmetric function indexed 
 by the undirected graph $G$ underlying $D$. In particular, 
 we may verify the relation $\chi(\text{{\bf X}}_{D}) = X_{G}$ 
 with what is produced from the following {\tt SageMath} input. 
\begin{verbatim}
NSym = NonCommutativeSymmetricFunctions(QQ);
Sym = SymmetricFunctions(QQ);
Psi = NSym.Psi();
h = Sym.complete();
p = Sym.power();
p((Psi[1,1,1,1] - 2*Psi[2,1,1] - Psi[1,2,1] - Psi[1,1,2] + 
3*Psi[3,1] + Psi[1,3] + 2*Psi[2,2] - 3*Psi[4]).chi())
\end{verbatim}
 This yields 
\begin{verbatim}
p[1, 1, 1, 1] - 4*p[2, 1, 1] + 2*p[2, 2] + 4*p[3, 1] - 3*p[4]
\end{verbatim}
 and this agrees with the $p$-expansion on display in \eqref{XGtopSageC4}. 
\end{example}

\subsection{Projection onto chromatic symmetric functions}
 Informally, since a directed graph can be thought of as ``containing more information'' compared to the underlying undirected graph, this motivates our 
 application of noncommutative symmetric functions in our devising an analogue for directed graphs of Stanley's chromatic symmetric functions, in the 
 following sense. Since an integer composition can, informally, be thought of as ``containing more information'' compared to the underlying integer 
 partition, this motivates the exploration as to how we could obtain insights on structural properties associated with digraphs by lifting chromatic 
 symmetric functions, which have expansions indexed by partitions, to an algebra with bases indexed by compositions, namely, the algbera 
 $\textsf{NSym}$. This intuitive notion can be formalized with our above construction and with our proof 
 of Theorem \ref{theoremprojection}. 

\begin{theorem}\label{theoremprojection}
 For a digraph $D$ with $G$ as its underlying undirected graph, the projection identity $\chi(\text{{\bf X}}_{D}) = X_{G}$ holds. 
\end{theorem}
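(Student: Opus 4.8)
The plan is to push the forgetful morphism $\chi$ through the defining expansion \eqref{displayXD} term by term and recover Stanley's power-sum formula \eqref{XGtop}. The engine of the argument is the single-generator relation $\chi(\Psi_{n}) = p_{n}$ of \eqref{chiPsin} together with the fact that $\chi$ is an algebra morphism. First I would record that $\chi(\Psi_{\alpha}) = p_{\text{sort}(\alpha)}$ for every composition $\alpha$: by the product rule \eqref{Psiprodrule} and the multiplicativity of $\chi$, one has $\chi(\Psi_{\alpha}) = \chi(\Psi_{\alpha_{1}}) \cdots \chi(\Psi_{\alpha_{\ell(\alpha)}}) = p_{\alpha_{1}} \cdots p_{\alpha_{\ell(\alpha)}}$, and since $\textsf{Sym}$ is commutative this ordered product of power-sum generators depends only on the multiset of its parts, hence equals $p_{\text{sort}(\alpha)}$.

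Next I would invoke the defining feature of $\alpha(S)$ from Definition \ref{definitionalpha}, namely that $\alpha(S) = \alpha^{>}_{D}(S)$ is obtained merely by permuting the entries of the partition $\lambda(S')$. Consequently $\text{sort}(\alpha(S)) = \lambda(S')$, so the previous step gives $\chi(\Psi_{\alpha(S)}) = p_{\lambda(S')}$ for each $S \subseteq E(D)$. Applying $\chi$ linearly to the expansion \eqref{displayXD} then yields
\begin{equation}
 \chi(\text{{\bf X}}_{D}) = \sum_{S \subseteq E(D)} (-1)^{|S|} \chi(\Psi_{\alpha(S)}) = \sum_{S \subseteq E(D)} (-1)^{|S|} p_{\lambda(S')}.
\end{equation}

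It remains to pass from arc subsets of $D$ to edge subsets of $G$. Since $G$ is the underlying undirected graph of $D$, the assignment $S \mapsto S'$ is a cardinality-preserving bijection from the subsets of $E(D)$ onto the subsets of $E(G)$, and the spanning subgraph of $G$ induced by $S'$ is precisely the one used to define $\lambda$ in \eqref{XGtop}; reindexing the sum by $T = S'$ therefore gives $\chi(\text{{\bf X}}_{D}) = \sum_{T \subseteq E(G)} (-1)^{|T|} p_{\lambda(T)}$, which is Stanley's expansion \eqref{XGtop} for $X_{G}$. The computation is essentially forced once these pieces are assembled, and the one step demanding genuine care is exactly this reindexing: one must confirm that distinct arcs of $D$ project to distinct edges of $G$, so that $S \mapsto S'$ neither collapses terms nor alters cardinalities or the associated partition. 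Under the standing convention that $G$ is the undirected graph underlying $D$, with the arcs of $D$ in bijection with the edges of $G$, this is immediate, and every remaining step is a routine application of the morphism property of $\chi$ and the definition of $\alpha(S)$.
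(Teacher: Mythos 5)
Your proposal is correct and follows essentially the same route as the paper's own proof: apply $\chi$ term by term to \eqref{displayXD}, use the product rule \eqref{Psiprodrule} and the projection property \eqref{chiPsin} to obtain $\chi(\Psi_{\alpha(S)}) = p_{\operatorname{sort}(\alpha(S))} = p_{\lambda(S')}$, and match the result against Stanley's expansion \eqref{XGtop}. Your explicit check that $S \mapsto S'$ is a cardinality-preserving bijection between arc subsets and edge subsets is a point the paper passes over silently, and making it explicit (under the standing convention that arcs of $D$ correspond bijectively to edges of $G$) is a welcome refinement rather than a deviation.
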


\begin{proof}
 From Definition \ref{definitionXD}, we obtain 
\begin{equation}\label{applychisides}
 \chi\left( \text{{\bf X}}_{D} \right) = \sum_{S \subseteq E(D)} (-1)^{|S|} \chi\left( \Psi_{\alpha(S)}\right). 
\end{equation}
 Recalling the product rule in \eqref{Psiprodrule} together with the projection
 property in \eqref{chiPsin} for $\Psi$-generators, we obtain from \eqref{applychisides} that 
\begin{equation}\label{chipsort}
 \chi\left( \text{{\bf X}}_{D} \right) = \sum_{S \subseteq E(D)} (-1)^{|S|} p_{\text{sort}(\alpha(S))}. 
\end{equation}
 Since $\alpha(S)$ is obtained by permuting the entries of the tuple $\lambda(S')$ according to \eqref{alphaSlambdaorder}, where $S'$, again, consists of 
 the edges in $E(G)$ corresponding to the arcs in $S \subseteq E(D)$, we find that the $p$-expansion on the right of \eqref{chipsort} agrees with the 
 power sum expansion of $X_{G}$. 
\end{proof}

\begin{example}
 Letting the digraph 
\begin{center}
\begin{tikzpicture}
\node[vertex] (1) at (0,0) {};
\node[vertex] (2) at (2,0) {};
\node[vertex] (3) at (4,0) {};
\node[vertex] (4) at (6,0) {};
\draw[black, very thick, edge] (1) -- (2); 
\draw[black, very thick, edge] (3) -- (2);
\draw[black, very thick, edge] (3) -- (4);
\end{tikzpicture}
\end{center}
 be denoted as $D$, we find that 
 $$ \text{{\bf X}}_{D} = \Psi_{1111} - 2 \Psi_{121} - \Psi_{211} + \Psi_{31} + \Psi_{22} + \Psi_{13} - \Psi_{4}. $$ 
 We may verify that 
 $$ \chi\left( \text{{\bf X}}_{D} \right) = p_{1111} - 3 p_{211} + p_{22} + 2 p_{31} - p_{4} $$ 
 using the following {\tt SageMath} code. 
\begin{verbatim}
NSym = NonCommutativeSymmetricFunctions(QQ);
Psi = NSym.Psi();
Sym = SymmetricFunctions(QQ);
p = Sym.power();
p((Psi[1,1,1,1] - 2*Psi[1,2,1] - Psi[2,1,1] + Psi[3,1] + 
Psi[2,2] + Psi[1,3] - Psi[4]).chi())
\end{verbatim}
 Letting $G$ denote the undirected graph underlying $D$, we may verify Theorem \ref{theoremprojection} using the following {\tt SageMath} input. 
\begin{verbatim}
G = graphs.PathGraph(4);
XG = G.chromatic_symmetric_function(); 
XG
\end{verbatim}
\end{example}

\subsection{Chromatic generating sets}\label{subsectiongenerating} 
 The \emph{chromatic bases for symmetric functions} given by Cho and van Willigenburg \cite{ChovanWilligenburg2016} and reviewed below lead us to 
 consider what would be appropriate as corresponding or analogous bases for $\textsf{NSym}$, as opposed to $\textsf{Sym}$. However, based on the 
 extant literature related to Cho and van Willigenburg's work \cite{ChovanWilligenburg2016}, including 
 \cite{ChovanWilligenburg2018,DahlbergvanWilligenburg2018,HamelHoangTuero2019,Penaguiao2018,Tsujie2018}, it seems that the problem of lifting the 
 Cho--van Willigenburg bases by using digraphs to obtain bases for $\textsf{NSym}$ has not previously been considered. 

 In 2016, Cho and van Willigenburg \cite{ChovanWilligenburg2016} introduced and proved an equivalent 
 formulation of the following analogue of \eqref{Symhgenerators}. 

\begin{theorem}\label{theoremCvW}
 (Cho $\&$ van Willigenburg, 2016) Let $\{ G_{i} \}_{i \geq 1}$ denote a set of connected graphs satisfying the property where $|V(G_{i})| = i$ for every 
 index $i$. Then 
\begin{equation}\label{SymgenXG} 
 \textsf{\emph{Sym}} = \mathbbm{k}[X_{G_{1}}, X_{G_{2}}, \ldots], 
\end{equation}
 and the elements of $\{ X_{G_{k}} \}_{k \geq 1}$ are algebraically independent over the base field \cite{ChovanWilligenburg2016}. 
\end{theorem}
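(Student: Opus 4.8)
The plan is to exhibit $\{X_{G_i}\}_{i \geq 1}$ as a second free polynomial generating set of $\textsf{Sym}$ by comparing it to the power sum generating set $\{ p_{(i)} \}_{i \geq 1}$, for which $\textsf{Sym} = \mathbbm{k}[p_{(1)}, p_{(2)}, \ldots]$ freely (in characteristic $0$). Since $|V(G_i)| = i$, the symmetric function $X_{G_i}$ is homogeneous of degree $i$, and Stanley's expansion \eqref{XGtop} writes it as a $\mathbbm{k}$-linear combination of power sums $p_{\lambda}$ with $\lambda \vdash i$. The only partition of $i$ with a single part is $(i)$, and $\lambda(S) = (i)$ precisely when the spanning subgraph $(V(G_i), S)$ is connected. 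Thus \eqref{XGtop} takes the triangular shape
$$ X_{G_i} = c_i \, p_{(i)} + \sum_{\substack{\lambda \vdash i \\ \ell(\lambda) \geq 2}} d_{\lambda} \, p_{\lambda}, \qquad c_i = \sum_{\substack{S \subseteq E(G_i) \\ (V(G_i), S) \text{ connected}}} (-1)^{|S|}, $$
where every $p_\lambda$ with $\ell(\lambda) \geq 2$ and $|\lambda| = i$ lies in the subalgebra $\mathbbm{k}[p_{(1)}, \ldots, p_{(i-1)}]$ generated by power sums of strictly smaller degree.

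The crux is to show $c_i \neq 0$, and this is exactly the place where connectivity of $G_i$ is used. First I would identify $c_i$ with a Tutte-polynomial evaluation: since $G_i$ is connected its rank is $i - 1$, and (writing $r$ for the matroid rank, $r(S) = i - c(S)$ with $c(S)$ the number of components of $(V(G_i),S)$) a subset $S$ yields a connected spanning subgraph iff it has full rank $i-1$. Hence setting $x = 1$ in $T_{G_i}(x, y) = \sum_{S} (x-1)^{\,r(E)-r(S)}(y-1)^{\,|S|-r(S)}$ kills every term except the connected spanning ones, giving $T_{G_i}(1, y) = \sum_{S \text{ conn.\ spanning}} (y-1)^{|S|-(i-1)}$. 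Evaluating at $y = 0$ then yields $c_i = (-1)^{i-1} T_{G_i}(1, 0)$. I would finish by invoking the classical interpretation of $T_{G_i}(1,0)$ as the number of acyclic orientations of $G_i$ with a unique source at a fixed vertex; for a connected graph this count is at least one (orient each edge from smaller to larger along a linear order placing the fixed vertex first), so $T_{G_i}(1,0) > 0$ and hence $c_i \neq 0$.

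With $c_i \neq 0$ in hand, generation follows by induction on $i$: solving the displayed triangular relation for $p_{(i)}$ and substituting the expressions for $p_{(1)}, \ldots, p_{(i-1)}$ obtained at earlier stages writes each $p_{(i)}$ as a polynomial in $X_{G_1}, \ldots, X_{G_i}$, whence $\mathbbm{k}[X_{G_1}, X_{G_2}, \ldots] \supseteq \mathbbm{k}[p_{(1)}, p_{(2)}, \ldots] = \textsf{Sym}$, and the reverse inclusion is clear. For algebraic independence I would argue by Hilbert series: the surjection $\mathbbm{k}[z_1, z_2, \ldots] \twoheadrightarrow \textsf{Sym}$ sending the degree-$i$ generator $z_i$ to $X_{G_i}$ is a map of graded vector spaces whose source and target both have Hilbert series $\prod_{n \geq 1}(1 - t^n)^{-1}$ and are finite-dimensional in each degree; a degree-preserving surjection between such spaces with equal Hilbert series is an isomorphism, so the map is injective and the $X_{G_i}$ satisfy no algebraic relation. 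The main obstacle is the nonvanishing of $c_i$; everything else is the standard triangularity-plus-Hilbert-series bookkeeping.
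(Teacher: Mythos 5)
The paper itself contains no proof of this theorem: it is quoted from Cho and van Willigenburg with a citation, and the only place the paper engages with the key point --- nonvanishing of the leading coefficient of $p_{(i)}$ --- is the proof of Theorem \ref{20241003q4q4q1qAqM1az}, which deduces that nonvanishing \emph{from} Theorem \ref{theoremCvW} rather than proving it. Measured against the original argument, your proposal follows the same skeleton: triangularity of Stanley's expansion \eqref{XGtop} with respect to the free generators $p_{(1)}, p_{(2)}, \ldots$, nonvanishing of the coefficient $c_i$ of $p_{(i)}$ when $G_i$ is connected, then induction for generation and a graded dimension count for algebraic independence; all of that bookkeeping is sound, including the observation that every $p_{\lambda}$ with $\ell(\lambda) \geq 2$ and $|\lambda| = i$ has all parts strictly less than $i$. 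Your identification $c_i = (-1)^{i-1} T_{G_i}(1,0)$ is correct and is a legitimately self-contained route to $c_i \neq 0$; Cho and van Willigenburg instead obtain the same nonvanishing from known facts about the leading power sum coefficient (equivalently, $c_i$ is the M\"obius function value $\mu(\hat{0}, \hat{1})$ of the bond lattice of $G_i$, nonzero of sign $(-1)^{i-1}$), so your Tutte-polynomial detour effectively reproves that input via Greene--Zaslavsky.

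There is one local slip, in exactly the step your whole proof funnels through. The orientation you construct to show $T_{G_i}(1,0) \geq 1$ --- ``orient each edge from smaller to larger along a linear order placing the fixed vertex first'' --- is acyclic and makes the fixed vertex \emph{a} source, but not in general the \emph{unique} source, which is what Greene--Zaslavsky counts. Concretely, take the path with edges $\{v_1, v_3\}$ and $\{v_2, v_3\}$ and the order $v_1 < v_2 < v_3$: orienting upward gives $v_1 \to v_3$ and $v_2 \to v_3$, and both $v_1$ and $v_2$ are sources. The repair is immediate and is where connectivity genuinely enters: choose the linear order to be a BFS or DFS order rooted at the fixed vertex, so that every non-root vertex has an earlier neighbor and hence an incoming arc; then the root is the unique source, $T_{G_i}(1,0) \geq 1$, and the rest of your argument goes through unchanged.
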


 This leads us to devise an analogue of Theorem \ref{theoremCvW} in the setting of $\textsf{NSym}$, as below. For the purposes of the below proof and 
 of our paper more generally, we let digraphs be without directed loops. 

\begin{theorem}\label{20241003q4q4q1qAqM1az}
 Let $\{ D_{i} \}_{i \geq 1}$ be a set of digraphs such that $|V(D_{i})| = i$ for each $i$ and such that the undirected graph underlying $D_{i}$ is 
 connected for each $i$. Then $$ \textsf{\emph{NSym}} = \mathbbm{k}\langle \text{{\bf X}}_{D_{1}}, \text{{\bf X}}_{D_{2}}, \ldots \rangle, $$ and the 
 elements of $\{ \text{{\bf X}}_{D_{i}} \}_{i \geq 1}$ are algebraically independent over $\mathbbm{k}$. 
\end{theorem}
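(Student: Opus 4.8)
The plan is to mirror the proof of Theorem~\ref{theoremCvW}, replacing the partition-indexed $p$-triangularity of the commutative setting by a composition-indexed $\Psi$-triangularity. Since $\textsf{NSym}$ is the free associative algebra with one generator in each positive degree, the basis $\{\Psi_\beta\}_{\beta \vDash n}$ shows that its degree-$n$ component has dimension equal to the number $2^{n-1}$ of compositions $\beta \vDash n$. For a composition $\beta = (\beta_1,\dots,\beta_\ell) \vDash n$ I write $\text{{\bf X}}_{D_\beta} := \text{{\bf X}}_{D_{\beta_1}} \text{{\bf X}}_{D_{\beta_2}} \cdots \text{{\bf X}}_{D_{\beta_\ell}}$, and there are exactly $2^{n-1}$ such products in degree $n$. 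I would prove the theorem by showing that, in each degree $n$, the transition matrix expressing $\{\text{{\bf X}}_{D_\beta}\}_{\beta \vDash n}$ in $\{\Psi_\beta\}_{\beta \vDash n}$ is triangular with nonzero diagonal, hence invertible. This forces the products to form a basis of every graded component, which yields both the generation statement and, because the noncommutative monomials in the $\text{{\bf X}}_{D_i}$ are then linearly independent in every degree, the algebraic independence.

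First I would analyze a single factor. Expanding $\text{{\bf X}}_{D_i}$ via \eqref{displayXD}, every $\alpha(S)$ is a composition of $i = |V(D_i)|$ whose length equals the number of connected components of the spanning subgraph of $G_i$ induced by $S'$; hence the only length-one composition that can occur is $(i)$, and all remaining $\Psi_\beta$ have $\ell(\beta) \ge 2$. Let $c_i$ denote the coefficient of $\Psi_{(i)}$ in $\text{{\bf X}}_{D_i}$. Applying $\chi$, using $\chi(\Psi_\beta) = p_{\text{sort}(\beta)}$ and Theorem~\ref{theoremprojection}, and noting that $(i)$ is the unique composition of $i$ sorting to the partition $(i)$, I obtain $c_i = [p_{(i)}]\,X_{G_i}$. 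The crucial point is that this coefficient is nonzero whenever $G_i$ is connected: this is exactly the nonvanishing that underlies Theorem~\ref{theoremCvW}, and it can be made explicit through the Tutte evaluation $[p_{(i)}]\,X_{G_i} = (-1)^{i-1} T_{G_i}(1,0)$ with $T_{G_i}(1,0) > 0$ for connected $G_i$. I expect this nonvanishing to be the main obstacle; the rest is bookkeeping.

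Next I would assemble the products. By the concatenation rule \eqref{Psiprodrule}, expanding $\text{{\bf X}}_{D_\beta}$ amounts to choosing one term $\Psi_{\alpha(S_j)}$ from each factor and concatenating, producing $\Psi_\gamma$ with $\gamma = \alpha(S_1)\cdots\alpha(S_\ell)$. Since each $\alpha(S_j) \vDash \beta_j$, every resulting $\gamma$ refines $\beta$, i.e.\ $\gamma \le \beta$ in the refinement order; and a short prefix argument shows $\gamma = \beta$ occurs only when each factor contributes its length-one term $\Psi_{(\beta_j)}$. Hence $\text{{\bf X}}_{D_\beta} = c_\beta\, \Psi_\beta + \sum_{\gamma < \beta} (\ast)\,\Psi_\gamma$ with diagonal coefficient $c_\beta = \prod_{j} c_{\beta_j} \neq 0$ by the previous step. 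The transition matrix is therefore triangular with nonzero diagonal with respect to $\le$, hence invertible in each degree, so $\{\text{{\bf X}}_{D_\beta}\}_{\beta \vDash n}$ is a basis of the degree-$n$ component for every $n$. Consequently the elements $\text{{\bf X}}_{D_i}$ generate $\textsf{NSym}$, and since their noncommutative monomials are linearly independent in every degree they satisfy no nontrivial relation, giving the claimed algebraic independence over $\mathbbm{k}$.
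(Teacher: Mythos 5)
Your proof is correct, and it arrives at the same crux as the paper --- the nonvanishing of the coefficient of $\Psi_{(i)}$ in $\text{{\bf X}}_{D_{i}}$ --- but by a genuinely different route. The paper argues inductively on degree: assuming the subalgebra generated by $\text{{\bf X}}_{D_{1}}, \ldots, \text{{\bf X}}_{D_{j}}$ is the span of the $\Psi_{\alpha}$ with all parts at most $j$, it adjoins $\text{{\bf X}}_{D_{j+1}}$ to recover $\Psi_{(j+1)}$, and it obtains the key nonvanishing \emph{indirectly}, by contradiction: if the coefficient were zero then, by Theorem \ref{theoremprojection} and the fact that $(j+1)$ is the unique composition (resp.\ partition) of $j+1$ having $j+1$ as a part, $p_{(j+1)}$ could not be expressed in terms of the Cho--van Willigenburg generators, contradicting Theorem \ref{theoremCvW} since $G_{j+1}$ is connected. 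You instead make the nonvanishing explicit via $c_{i} = [p_{(i)}]X_{G_{i}} = (-1)^{i-1}T_{G_{i}}(1,0) \neq 0$; this evaluation is standard and correct (from \eqref{XGtop} one has $c_{i} = \sum (-1)^{|S|}$ over connected spanning subgraphs, and $T_{G}(1,0) \geq 1$ for connected $G$ by, e.g., the Greene--Zaslavsky count of acyclic orientations with a unique fixed sink), though you should supply a citation rather than assert it. You then replace the induction by a single global triangularity argument: since each $\alpha(S_{j}) \vDash \beta_{j}$, concatenation gives $\gamma \leq \beta$ in refinement order, and your length count $\sum_{j} \ell(\alpha(S_{j})) \geq \ell(\beta)$ correctly isolates the diagonal term, so $\text{{\bf X}}_{D_{\beta}} = c_{\beta}\Psi_{\beta} + \sum_{\gamma < \beta} (\ast)\Psi_{\gamma}$ with $c_{\beta} = \prod_{j} c_{\beta_{j}} \neq 0$. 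Your route buys two things: it is self-contained, not using Theorem \ref{theoremCvW} as a black box, and it proves slightly more than the statement --- that $\{\text{{\bf X}}_{D_{\beta}}\}_{\beta \vDash n}$ is a basis of each graded component, a noncommutative analogue of Cho and van Willigenburg's chromatic bases --- from which generation and freeness both fall out at once via linear independence of the homogeneous monomials. The paper's induction is shorter given Theorem \ref{theoremCvW}, but its concluding independence argument (that $\Psi_{(j+1)}$ is not a product $\Psi_{\alpha}\Psi_{\beta}$ for nonempty $\alpha, \beta$) is less transparent than your basis argument.
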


\begin{proof}
 For the base case such that $i = 1$, we find that $D_{k}$ consists of a single vertex (and does not contain arcs), and we see that 
 $\text{{\bf X}}_{D_{1}} = \Psi_{1}$. Recalling the graded structure for $\textsf{NSym}$ given by the decomposition in 
 \eqref{NSymgraded}, we inductively assume that 
\begin{equation}\label{inductivelyassume}
 \mathbbm{k}\langle \text{{\bf X}}_{D_{1}}, \text{{\bf X}}_{D_{2}}, \ldots, \text{{\bf X}}_{D_{j}} \rangle 
 = \bigoplus_{i = 0}^{\infty} 
 \mathscr{L}\{ \Psi_{\alpha} : \alpha \vDash n, \, \forall k \, \alpha_{k} \leq j \} 
\end{equation}
 Let $G_{j+1}$ denote the undirected graph underlying $D_{j+1}$. Consider the coefficient of $\Psi_{(j+1)}$ in the $\Psi$-expansion of 
 $\text{{\bf X}}_{D_{j+1}}$. We claim that this coefficient is nonzero. We write $\text{{\bf X}}_{D_{j + 1}}$ as a linear combination of expressions 
 of the form $\Psi_{\alpha}$ for $\alpha \vDash j + 1$, and find that the composition $(j+1) \vDash j+1$ is the unique composition of order $\leq j+1$ 
 containing $j + 1$ as an entry. Similarly, with regard to the $p$-expansion of $X_{G_{j+1}}$, we find that partition $(j+1) \vdash j+1$ is the unique 
 partition of order $\leq j+1$ containing $j+1$ as an entry. So, the aforementioned coefficient could not be equal to $0$, because, otherwise, it would 
 not be possible, by Theorem \ref{theoremprojection} and by the graded algebra structure on $\textsf{Sym}$, to express $p_{(j+1)}$ according to 
 the Cho--van Willigenburg generators in \eqref{SymgenXG}, recalling that $G_{j+1}$ is connected (by assumption). From the inductive assumption of 
 \eqref{inductivelyassume}, by taking the free $\mathbbm{k}$-algebra generated by $\{ \text{{\bf X}}_{D_{1}}, \text{{\bf X}}_{D_{2}}, \ldots, 
 \text{{\bf X}}_{D_{j}} \} \cup \{ \text{{\bf X}}_{D_{j+1}} \}$, since the coefficient of $\Psi_{(j+1)}$ in $\text{{\bf X}}_{D_{j+1}}$ is nonzero, we 
 may express $\Psi_{(j+1)}$ by taking a linear combination of $\text{{\bf X}}_{D_{j+1}}$ and a combination of lower-degree generators. That is, 
 $ \mathbbm{k}\langle \text{{\bf X}}_{D_{1}}, \text{{\bf X}}_{D_{2}}, \ldots, \text{{\bf X}}_{D_{j+1}} \rangle = \mathbbm{k}\langle \Psi_{1}, \Psi_{2}, 
 \ldots, \Psi_{j+1} \rangle$, as desired. Since the coefficient of $\Psi_{(j+1)}$ in $\text{{\bf X}}_{j+1}$ is nonzero, and since it is not possible to 
 express the generator $\Psi_{j+1} = \Psi_{(j+1)}$ as a product $\Psi_{\alpha} \Psi_{\beta}$ for nonempty compositions $\alpha$ and $\beta$, we 
 obtain the desired algebraic independence property. 
\end{proof}

\begin{example}
 Cho and van Willigenburg \cite{ChovanWilligenburg2016} provided explicit and combinatorial formulas for generating sets of the form indicated in Theorem 
 \ref{theoremCvW}. This raises questions as to how we could obtain results of a similar nature, according to Theorem \ref{20241003q4q4q1qAqM1az}. 
 In this regard, by letting $S_{n+1}$ denote the star graph of order $(n+1) \geq 1$, Cho and van Willigenburg \cite{ChovanWilligenburg2016} introduced 
 and proved the formula 
\begin{equation}\label{Xstarp}
 X_{S_{n+1}} = \sum_{i=0}^{n} (-1)^{i} \binom{n}{i} p_{(i+1, 1^{n-i})}, 
\end{equation}
 and, since there is a natural directed analogue of $S_{n+1}$, this leads us to formulate a noncommutative version of \eqref{Xstarp}. We define the 
 \emph{inwardly directed star graph} $\text{{\bf S}}_{n}$ as the directed graph $D$ such that $|V(D)| = n$ and such that the following is satisfied. 
 It is possible to label the vertices of $D$ in such a way so that if we write $V(D) = \{ v_{1}, v_{2}, \ldots, v_{n} \}$, then $E(D) = \{ (v_{2}, v_{1}), 
 (v_{3}, v_{1}), \ldots, (v_{n}, v_{1}) \}$. For $n \geq 0$, we can show that 
\begin{equation*}
 \text{{\bf X}}_{\text{{\bf S}}_{n+1}} 
 = \sum_{i=0}^{n} (-1)^{i} \binom{n}{i} \Psi_{(1^{n-i}, i + 1)}, 
\end{equation*}
 following a similar approach as in \cite{ChovanWilligenburg2016}. 
\end{example}

 A combinatorial formula for expanding $X_{P_{n}}$ in the $p$-basis is given in \cite{ChovanWilligenburg2016}, for the path graph $P_{n}$ with a 
 vertex set of size $n$. Devising an analogue of this formula for directed path graphs is complicated by how the total degrees for connected components 
 containing the endpoints could lead to permutations of the tuples in the indices in the $p$-expansion of $X_{P_{n}}$ given in 
 \cite{ChovanWilligenburg2016}. For the time being, we leave this to a future study, and this, in turn, leads us to consider the further areas for future 
 research described in Section \ref{sectionConclusion} below. 

\section{Conclusion}\label{sectionConclusion}
 We conclude with some further areas to explore concerning our lifting to $\textsf{NSym}$
 of chromatic symmetric functions. 

 Letting $\{ e_{\lambda} \}_{\lambda \in \mathcal{P}}$ denote the elementary basis of $\textsf{Sym}$, the $e$-positivity of chromatic symmetric functions 
 is of central interest within the areas of mathematics concerning Stanley's generalization of chromatic polynomials. This is evidenced by the extent of 
 the import of the Stanley--Stembridge conjecture, which would give us every claw-free incomparability grpah is $e$-positive. Following the work of 
 Wang and Wang \cite{WangWang2023chord}, we express that there are many important classes of graphs that have been shown to be $e$-positive, 
 and this includes the classes of complete graphs, paths, and cycles. What classes of digraphs are $E$-positive, for the elementary basis $\{ E_{\alpha} 
 \}_{\alpha \in \mathcal{C}}$ of $\textsf{NSym}$? In view of the importance of Schur-positivity for graphs, this leads us to ask: What classes of 
 digraphs are immaculate-positive, for the immaculate basis $\{ \mathfrak{S}_{\alpha} \}_{\alpha \in \mathcal{C}}$? 

 Observe that the usual Hopf algebra structures on $\textsf{NSym}$ and $\textsf{Sym}$ have not been considered above, at least in direct ways. We 
 encourage a full exploration as to how the Hopf algebra structure on $\textsf{NSym}$ could be applied to develop the theory surrounding our construction 
 of $\text{{\bf X}}_{D}$ for a digraph $D$. How could we obtain cancellation-free and combinatorial formulas for the coproduct $\Delta 
 \text{{\bf X}}_{D}$ for a given digraph $D$, for the usual comultiplication operation $\Delta\colon \textsf{NSym} \otimes \textsf{NSym} \to 
 \textsf{NSym}$ on $\textsf{NSym}$? What is the antipode of $\text{{\bf X}}_{D}$? 

 As a step toward formulating an analogue in the setting of $\textsf{NSym}$ of Stanley's tree isomorphism conjecture, we ask: To what extent can 
 $ \text{{\bf X}}_{D}$, for a directed graph $D$ such that its underlying undirected graph is a tree $T$, determine the isomorphism class for $D$? 

 Given a basis $B$ of $\textsf{NSym}$ formed from expressions of the form $\text{{\bf X}}_{D}$ in the manner described in Section 
 \ref{subsectiongenerating}, how could the dual basis of $\textsf{QSym}$ give light to graph-theoretic properties of $B$ and the digraphs used to 
 construction $B$? How could we obtain combinatorial formulas for products of elements in this dual basis of $\textsf{QSym}$? 

 In view of the positive expansion of $X_{G}$ in the monomial basis, 
 how could we obtain an explicit and cancellation-free 
 expansion for $\text{{\bf X}}_{D}$ 
 in the analogue for $\textsf{NSym}$ introduced by Tevlin \cite{Tevlin2007} 
 of the commutative monomial basis? 

\subsection*{Acknowledgments}
 The author was supported by a Killam Postdoctoral Fellowship from the Killam Trusts.

 \

John M.\ Campbell

Department of Mathematics and Statistics, Dalhousie University

6299 South St., Halifax, NS B3H 4R2, Canada

{\tt jmaxwellcampbell@gmail.com}

\end{document}